\newtheorem{thm}{Theorem}[section]
\newtheorem{lem}[thm]{Lemma}
\newtheorem{cor}[thm]{Corollary}
\newtheorem{prop}[thm]{Proposition}
\newtheorem{defin}[thm]{Definition}
\newtheorem{definlem}[thm]{Definition--Lemma}
\def\bclaim{\begin{claim}}
\def\eclaim{\end{claim}}
\def\bdefin{\begin{defin}}
\def\edefin{\end{defin}}
\def\bcor{\begin{cor}}
\def\ecor{\end{cor}}
\def\bthm{\begin{thm}}
\def\ethm{\end{thm}}
\def\blem{\begin{lem}}
\def\elem{\end{lem}}
\def\bdeflem{\begin{definlem}}
\def\edeflem{\end{definlem}}
\def\blemma{\begin{lem}}
\def\elemma{\end{lem}}
\def\bprop{\begin{prop}}
\def\eprop{\end{prop}}
\def\bremark{\begin{remark}}
\def\eremark{\end{remark}}
\def\bpf{\begin{proof}}
\def\epf{\end{proof}}
\def\beq{\begin{equation}}
\def\eeq{\end{equation}}
\def\beqno{\begin{equation*}}
\def\eeqno{\end{equation*}}
\def\eaeq{\end{aligned}}
\def\baeq{\begin{aligned}}
\def\hpq0{h^{p,q}_{\leq 0}}
\def\Hpq0{\H_{\leq 0}^{p,q}}
\def\dbar{\bar\partial}
\def\ddbar{\partial\dbar}
\def\R{{\mathbb R}}
\def\C{{\mathbb C}}
\def\L{{L}}
\def\K{{\mathcal K}}
\def\H{{\mathcal H}}
\def\O{{\mathcal O}}
\def\Re{{\rm Re\,  }}
\def\U{{\mathcal U}}
\def\be{\begin{equation}}
\def\ee{\end{equation}}
\def\L{\mathcal{L}}
\def\opcit{\underbar{\phantom{aaaaa}}}
\def\MA{Monge--Amp\`ere }
\def\K{K\"ahler }
\def\i{\sqrt{-1}}
\def\del{\partial}
\def\dbar{\bar\partial}
\def\ddbar{\del\dbar}
\def\ra{\rightarrow}
\def\eps{\epsilon}
\newcommand{\RR}{\mathbb{R}}
\newcommand{\CC}{\mathbb{C}}
\def\del{\partial}
\newcommand{\calH}{\mathcal{H}}
\def\sm{\setminus}
\def\O{\Omega}
\def\sm{\setminus}
\def\w{\wedge}
\def\o{\omega}
\def\vp{\varphi}
\def\beq{\begin{equation}}
\def\eeq{\end{equation}}
\def\bi#1{\bibitem{#1}}
\def\h#1{\hbox{#1}}
\def\b#1{\bar{#1}}
\def\calL{{\mathcal L}}
\def\bal{\begin{aligned}}
\def\eal{\end{aligned}}
\def\q{\quad}
\font\Bbbsmall=msbm7
\def\outlinsmall#1{\hbox{\Bbbsmall #1}}
\def\CCfoot{{\outlinsmall C}}
\def\ovp{{\o_{\varphi}}}
\def\phiC{\phi_{\CCfoot}}
\def\Dphi{D_\phi}
\def\Lphi{\calL_\phi}
\def\La{\Lambda}
\def\la{\lambda}
\theoremstyle{definition}
\theoremstyle{remark}
\newtheorem{preremark}{Remark}
\newtheorem{preex}{Example}
\newenvironment{remark}{\begin{preremark}}{\qed\end{preremark}}
\def\bi#1{\bibitem{#1}}
\def\opcit{\underbar{\phantom{aaaaa}}}
\def\usc{\h{\rm usc}}
\numberwithin{equation}{section}
\begin{document}

\title[]
      {Complex Legendre duality}

\author[]{ Bo Berndtsson, Dario Cordero-Erausquin, Bo'az Klartag, Yanir A. Rubinstein}

\begin{abstract}

We introduce complex generalizations of the classical
Legendre transform,  operating on K\"ahler metrics on a compact complex manifold. These Legendre transforms give  explicit local isometric symmetries
for the Mabuchi metric on the space of K\"ahler metrics around any real analytic K\"ahler metric,
answering a question originating in Semmes' work.

\end{abstract}

%prove that for any real analytic K\"ahler metric $\omega$ on a compact manifold, there
%exists an involution defined near $\omega$ that fixes $\omega$ (and only $\omega$) and is
%an isometry for the Mabuchi metric.

\maketitle

\section{Introduction}
For a function $\psi:\R^N\ra\R$ its classical Legendre transform is defined as
\cite{Fenchel,Mandelbrojt}
\be
\psi^*(y)=\sup_x [x\cdot y-\psi(x)].
\ee
This transform plays an important role in several parts of mathematics,
notably in classical mechanics and convex geometry. Being the supremum of affine functions (of $y$), $\psi^*$ is always convex and in case $\psi$ is convex it equals the Legendre transform of its Legendre transform. One easily verifies
(see Section \ref{Sec2})  that $\psi^*=\psi$ if and only if $\psi(x)=x^2/2$, so the Legendre transform is a symmetry on the space of convex functions around its fixed point $x^2/2$.

In particular this applies when $\R^N=\C^n$. In this case we change the definition  slightly and put
\be
\hat\psi(w)=\sup_z[ 2\Re (z\cdot \bar w)-\psi(z)],
\ee
where $a\cdot b:=\sum_{i=1}^n a_i b_i$.
The reason for this change is that while the supremum in (1.1) is attained if $y=\partial \psi(x)/\partial x$, the supremum in (1.2) is attained if $w=\partial\psi(z)/\partial\bar z$, and one verifies that the unique fixed point is now $\psi(z)=|z|^2$.

In this connection a very interesting observation was made by Lempert, \cite{Lempert}:  Put $\omega_\psi:=i\ddbar\psi$. Assuming that $\psi$ is smooth and strictly convex, $g^*\omega_{\hat\psi}=\omega_\psi$, if $g(z)=\partial\psi(z)/\partial\bar z$. It follows from this that
\be
g^*\omega_{\hat\psi}^n=\omega_\psi^n.
\ee
The measure  $\omega_\psi^n/n!=:\h{MA}_\C(\psi)$ is the complex Monge--Amp\`ere measure associated to $\psi$, and Lempert's theorem thus implies that
$\h{MA}_\C(\psi)$ and $\h{MA}_\C(\hat\psi)$ are related under the gradient map $g$.
This may be compared and perhaps contrasted to the way real Monge--Amp\`ere measures transform under gradient maps, see Section~\ref{Sec2}.

At this point we recall the definition of the Mabuchi metric in the somewhat nonstandard setting of smooth, strictly plurisubharmonic functions on $\C^n$. The idea is to view this space as an infinite dimensional manifold; an open subset of the space of all smooth functions. Its tangent space at a point $\phi$ should consist of smooth functions $\chi$ such that $\phi+t\chi$ remains strictly plurisubharmonic for $t$ close enough to zero. This set of course depends on the particular $\phi$ we have chosen, but at any rate the tangent space will always contain smooth functions of compact support, so we take by definition the space of such functions as our tangent space. The Mabuchi norm of a tangent vector  $\chi$  at a point $\phi$ is now defined by
\be
\|\chi\|^2_\phi:= \int_{\C^n} |\chi|^2 \omega_\phi^n/n!.
\ee

We will interpret (1.3) as saying  that the Legendre transform is an isometry for the Mabuchi metric on the space of convex functions (see section 4). It follows, at least formally, that the Legendre transform maps geodesics for the Mabuchi metric to geodesics, which reflects the so called duality principle for the complex method of interpolation \cite{BBDY}.

This is only a %very
special case of Lempert's result, which implies  that a much more general class of gradient-like maps are isometries for the Mabuchi metric. In this note we will develop this scheme and define `Legendre tranforms' for  K\"ahler potentials over a manifold $M$, usually compact.

For this we note first that the usual Legendre transform is not involutive on all plurisubharmonic functions, but just on convex functions, hence in particular on functions that are close to its fixed point $|z|^2$. Imitating this, we start with a (local) K\"ahler potential  $\phi$ on $M$, and define a 'Legendre tranform' depending on $\phi$ that is defined for potentials close to  $\phi$,  fixes $\phi$ and is an isometry for the Mabuchi metric. For this to work, we need to assume that $\phi$ is real analytic. The definition of the $\phi$-Legendre transform involves a polarization of our real analytic potential, which is locally a function $\phi_\C(z,w)$ defined near the diagonal in $M\times M$. $\phi_\C$ is
holomorphic in $z$, antiholomorphic in $w$ and coincides with $\phi$ on the diagonal (these properties determine $\phi_\C$ uniquely). Roughly speaking, the idea is then to replace $z\cdot \bar w$ by $\phi_\C$ and define our transform as
\be
(\L_\phi \psi)(w):=\sup_z [ 2\Re\phi_\C(z,w)-\psi(z)].
\ee
When $\phi(z)=|z|^2$ this gives us back the Legendre transform of (1.2). Let us first examine this transform 
in the case of a linear space, the cradle of the classical Legendre transform. Write $\Delta_{\CC^n} = \{ (z,z) \, ; \, z \in \CC^n \}$ for the diagonal. 
We say that a smooth function $\phi$
on $\CC^n$ is strongly plurisubharmonic if its complex Hessian is bounded from below by a positive constant, uniformly 
at all points of $\CC^n$. 

\begin{thm}
Let $\phi: \CC^n \rightarrow \RR$ be a real-analytic, strongly plurisubharmonic function.
Then there are an open set $V_\phi \subseteq \CC^n \times \CC^n$ containing $\Delta_{\CC^n}$
and a neighborhood $U$ of $\phi$ in the $C^2$-norm on $\CC^n$ with the following properties: 

0. For $u \in U$ the function $\L_{\phi}(u): \CC^n \rightarrow \RR \cup \{ + \infty \}$ is well-defined, where the supremum in (1.5) 
runs over all $z$ with $(z,w) \in V_{\phi}$.

1. $\L_\phi(u)=u$ if and only if $u=\phi$.

2. For $u$ in a smaller neighbourhood $U'\subset U$, the function $\L_\phi(u)$ lies in $U$ and $L_\phi^2(u)=u$.

3. $\L_\phi$ is an isometry for the Mabuchi metric restricted to $U$.
\end{thm}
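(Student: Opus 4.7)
The strategy is to mimic, in the $\phi$-Legendre setting, the classical convex-duality picture: identify a gradient-type map $g_u$ encoding the maximizer in (1.5), prove that it is a $C^1$ diffeomorphism realizing an involution $\L_\phi^2=\mathrm{id}$, and derive the isometry from an analog of Lempert's identity (1.3). Concretely, set $F_{u,w}(z):=2\Re\phi_\C(z,w)-u(z)$ and view $\phi_\C$ as a function $\Psi(z,\bar w)$ holomorphic in both slots with $\Psi(z,\bar z)=\phi(z)$. Differentiating this polarization identity yields $\partial_{z_j}\Psi(z,\bar z)=\partial_{z_j}\phi(z)$ and $\partial^2_{z_jz_k}\Psi(z,\bar z)=\partial^2_{z_jz_k}\phi(z)$, so at $u=\phi$, $z=w$ the gradient $\partial_zF_{\phi,w}$ vanishes and the real Hessian reduces to $-\partial\bar\partial\phi(w)$, uniformly negative-definite by strong plurisubharmonicity. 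The implicit function theorem then produces $V_\phi$ and a $C^2$-neighborhood $U$ of $\phi$ such that for every $u\in U$ and $w\in\C^n$ the equation $\partial_zu(z)=\partial_z\phi_\C(z,w)$ has a unique solution $z=g_u(w)$ with $(z,w)\in V_\phi$; after shrinking $V_\phi$ this solution is the strict global maximizer on the slice, so $\L_\phi u$ is well-defined (property 0) and $g_u$ is a $C^1$-diffeomorphism close to the identity.

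For properties 1 and 2, the reality of $\phi$ forces $\overline{\phi_\C(z,w)}=\phi_\C(w,z)$, making the critical-point equation symmetric in $(z,w)$. Combined with the envelope-theorem formula $\partial_w(\L_\phi u)(w)=\partial_w[2\Re\phi_\C(z,w)]|_{z=g_u(w)}$ and non-degeneracy of the mixed Hessian of $\phi_\C$, this symmetry reduces the critical-point equation for $\L_\phi(\L_\phi u)(\zeta)$ in the variable $\xi$ to $g_u(\xi)=\zeta$; substituting back and using $\Re\phi_\C(a,b)=\Re\phi_\C(b,a)$ cancels the cross-terms and gives $\L_\phi^2(u)=u$ on an appropriately shrunk $U'$, which is property 2. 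For property 1, the envelope-theorem linearization at $\phi$ (with $g_\phi=\mathrm{id}$) is $(d\L_\phi)_\phi(\chi)=-\chi$, so the $C^2$-map $u\mapsto u-\L_\phi(u)$ has derivative $2\,\mathrm{Id}$ at $\phi$; the implicit function theorem then forces $\phi$ to be the unique fixed point in $U'$.

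The isometry (property 3) is the main technical point. The envelope theorem gives $(d\L_\phi)_u(\chi)(w)=-\chi(g_u(w))$, so
\[
\|(d\L_\phi)_u\chi\|_{\L_\phi u}^2=\int_{\C^n}|\chi\circ g_u|^2\,\omega_{\L_\phi u}^n/n!,
\]
and, via change of variables, property 3 reduces to the Lempert-type identity $g_u^*\omega_u^n=\omega_{\L_\phi u}^n$, directly generalizing (1.3). In Lempert's flat case $\phi_\C(z,w)=z\cdot\bar w$ this is a one-line determinant check, but here the mixed second derivatives of $\Psi(z,\bar w)$ contribute nontrivial off-diagonal terms, and one must verify that along the graph $\{z=g_u(w)\}$ they reorganize to reproduce exactly $\omega_{\L_\phi u}^n$. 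This is the main obstacle. The cleanest route is to differentiate the functional identity $(\L_\phi u)(w)+u(z)=2\Re\phi_\C(z,w)$ (valid on the graph) twice, mixed in $z$ and $\bar w$: the resulting $\partial_z\partial_{\bar w}$ derivative simultaneously encodes the Jacobian of $g_u$ and the mixed Hessian of $\L_\phi u$, and taking determinants of both sides yields the desired Jacobian identity after rearrangement.
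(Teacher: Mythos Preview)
Your overall architecture matches the paper's: construct the maximizer map $g_u$ by an implicit-function argument near the diagonal, obtain the involution from the symmetry of the critical-point equation, compute the differential $(d\L_\phi)_u\chi=-\chi\circ g_u$, and reduce the isometry to a pullback identity for the Monge--Amp\`ere measures. Two points deserve comment where the paper proceeds differently.

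For property 1 you linearize and invoke the inverse function theorem for $u\mapsto u-\L_\phi u$. This only yields uniqueness of the fixed point in a possibly smaller neighborhood. The paper's argument is both simpler and stronger: from Calabi's lemma one has $2\Re\phi_\C(z,w)\le\phi(z)+\phi(w)$ with equality iff $z=w$, so $\L_\phi\phi=\phi$; conversely $\L_\phi u=u$ gives $u(z)+u(z)\ge 2\Re\phi_\C(z,z)=2\phi(z)$, hence $u\ge\phi$, and then the order-reversing property of $\L_\phi$ forces $u\le\phi$. No differentiation is needed and the conclusion holds for every $u$ in $U$, not just in $U'$.

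For property 3 you aim only at the top-degree identity $g_u^*\omega_u^n=\omega_{\L_\phi u}^n$ and propose to extract it by differentiating $(\L_\phi u)(w)+u(z)=2\Re\phi_\C(z,w)$ twice and ``taking determinants.'' This can be made to work but is exactly the computation you flag as the main obstacle, and your sketch does not control the chain-rule terms coming from $z=g_u(w)$. The paper sidesteps this entirely and proves the stronger form-level statement $g_u^*\omega_u=\omega_{\L_\phi u}$: on the graph $\Lambda=\{z=g_u(w)\}$ one has $d\phi_\C=\pi_1^*\partial u+\pi_2^*\bar\partial(\L_\phi u)$ (this is just the first-order optimality condition together with the envelope identity), and since the left side is closed, applying $d$ and then $(\pi_2^{-1})^*$ gives $g_u^*(i\partial\bar\partial u)=i\partial\bar\partial(\L_\phi u)$ directly. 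No determinants, no second-order chain rule; the argument is Lempert's original graph trick transported verbatim. You may want to replace your determinant step by this closed-form argument.
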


The transform in (1.5) works fine if $\phi$, $\psi$ and $\phi_\C$ are defined on all of $\C^n$ or $\C^{2n}$, but for functions that are only locally well defined we need to find a variant of the definition that has a global meaning on a manifold.
For this it turns out to be very convenient to use a remarkable idea of Calabi, \cite{Calabi}. The Calabi {\it diastasis function} is defined as
$$
D_\phi(z,w)=\phi(z)+\phi(w)-2\Re\phi_{\C}(z,w).
$$
We then  change the above definition by applying it  to $\psi+\phi$ instead of $\psi$, and then subtract $\phi$ afterwards. This way we arrive at the equivalent transform
$$
L_\phi(\psi)(w):=\L_\phi(\psi+\phi)(w)-\phi(w)=\sup_z (-D_\phi(z,w) -\psi(z)).
$$
Notice that in the classical case when $\phi(z)=|z|^2$, $D_\phi(z,w)=|z-w|^2$ and the transform becomes the familiar variant of the Legendre transform
$$
\sup_z -(|z-w|^2 +\psi(z)).
$$
The point of this is that, as is well known from the work of Calabi, $D_\phi$ only depends on $\omega_\phi=i\ddbar\phi$, i e it does not change if we add a pluriharmonic function to $\phi$. As we shall see this implies that our construction of $L_\phi=L_{\omega_\phi}$ globalizes and becomes well defined on functions $\psi$ on a manifold $M$ that are close to 0 in the $C^2$-norm. Following the ideas, but not the precise proof, of Lempert, we can then verify that $L_{\o_\phi}$ is an isometry for the Mabuchi metric on $\U_{\omega_\phi}$. Our main result is as follows:

\begin{thm}
Let $M$ be a compact K\"ahler manifold, and let $\omega$ be a real analytic K\"ahler form on $M$. Let $\H_\omega:=\{u\in C^\infty(M); i\ddbar u+\omega>0\}$. Then the generalized Legendre transform, $L_\omega$ (defined in section 4) is defined on a neigbourhood $U$ of 0 in $\H_\omega$ in the $C^2$-toplogy  and

1. $L_\omega(u)=u$ if and only if $u=0$.

2. For $u$ in a smaller neighbourhood $U'\subset U$, $L_\omega(u)$ lies in $U$ and $L_\omega^2(u)=u$.

3. $L_\omega$ is an isometry for the Mabuchi metric on $\H_\omega$ restricted to $U$.
\end{thm}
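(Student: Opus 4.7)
The plan is to exploit the fact that the Calabi diastasis $D_\omega(z,w)$ is intrinsic to $\omega$ and globally well-defined as a real-analytic function on some open neighborhood $V$ of the diagonal $\Delta_M \subset M \times M$, with $D_\omega \geq 0$ vanishing exactly on $\Delta_M$ to second order and with strictly positive complex Hessian transverse to $\Delta_M$. After shrinking $V$ so that $D_\omega(z,w) \geq c\,\mathrm{dist}(z,w)^2$, the implicit function theorem applied to the equation $\partial_z D_\omega(z,w) + \partial_z u(z) = 0$ produces, for each $u$ in a small $C^2$-neighborhood $U$ of $0$ in $\H_\omega$, a unique smooth map $g_u:M\to M$ close to $\mathrm{id}_M$ whose graph lies in $V$ and which realizes the supremum defining $L_\omega(u)(w) = -D_\omega(g_u(w),w) - u(g_u(w))$. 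Simultaneously, $L_\omega(u) \in C^\infty(M)$ is close to $0$ in $C^2$, hence lies in $\H_\omega$, and $g_0 = \mathrm{id}_M$.

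For Property 1, taking $z = w$ in the defining supremum shows that any fixed point $u = L_\omega(u)$ satisfies $u(w) \geq -D_\omega(w,w) - u(w) = -u(w)$, hence $u \geq 0$; combined with the identity $u(w) + u(g_u(w)) = -D_\omega(g_u(w),w) \leq 0$ at the critical value, this forces $u \equiv 0$ and $g_u = \mathrm{id}_M$. The converse $L_\omega(0) = 0$ is immediate. For Property 2, I would cover $M$ by finitely many balls on which $\omega = i\ddbar\phi_\alpha$ for a real-analytic local potential $\phi_\alpha$, observe the chart-wise tautology $L_\omega(u) = \mathcal{L}_{\phi_\alpha}(u + \phi_\alpha) - \phi_\alpha$, and invoke Theorem~1.1 (or a local version of its proof) together with the uniqueness of $g_u$ to promote the chart-wise involutions $\mathcal{L}_{\phi_\alpha}^2(u+\phi_\alpha) = u+\phi_\alpha$ into a global involution $L_\omega^2(u) = u$ on a possibly smaller $C^2$-neighborhood $U'$.

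The heart of the proof is Property 3. The envelope theorem, applied along a smooth path $u_t$ with tangent $\chi_t := \dot u_t$, yields
\[
\dot v_t(w) \;=\; -\chi_t(g_{u_t}(w)), \qquad v_t := L_\omega(u_t),
\]
since the $z$-derivative of $-D_\omega(z,w) - u_t(z)$ vanishes at $z = g_{u_t}(w)$. The decisive identity to establish is the Lempert-type pullback
\[
g_u^*\,\omega_u^n \;=\; \omega_{L_\omega(u)}^n
\]
as top-degree forms on $M$; this is the manifold analog of (1.3). Granting it, the standard change-of-variables formula gives
\[
\|\dot v_t\|_{v_t}^2 \;=\; \int_M \chi_t(g_{u_t}(w))^2\, \omega_{v_t}^n(w)/n! \;=\; \int_M \chi_t^2\, \omega_{u_t}^n/n! \;=\; \|\chi_t\|_{u_t}^2,
\]
which is the claimed Mabuchi isometry.

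The main obstacle is precisely this pullback identity. I would derive it chart-by-chart: on any ball where $\omega = i\ddbar\phi$ for a real-analytic potential $\phi$, the corresponding identity for $\mathcal{L}_\phi$ follows from the argument of Theorem~1.1, and the tautology $L_\omega(u) = \mathcal{L}_\phi(u+\phi) - \phi$ converts it into the identity we need. The crucial point is that $D_\omega$, and hence the gradient-like map $g_u$, is canonical on $M$ and independent of the choice of local potential, so the chart-wise identities patch into a single global statement. An alternative route, closer in spirit to Lempert's original strategy, differentiates the critical-point equation in $w$ to read off the relation between the complex Hessians of $u$ and $L_\omega(u)$ along the graph of $g_u$, and then computes the Jacobian of $g_u$ directly.
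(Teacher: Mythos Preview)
Your proposal is correct and follows essentially the same architecture as the paper: the implicit function theorem produces the gradient-like map $g_u=G(u)$ realizing the supremum (the paper's Theorem~5.1), the envelope theorem gives $dL_\omega(u).\chi=-\chi\circ G(u)$ (Proposition~7.1), and the Lempert-type pullback $G(u)^*\omega_u=\omega_{L_\omega(u)}$ (Theorem~6.1) then yields the Mabuchi isometry by change of variables (Theorem~7.2); your fixed-point argument for Property~1 is a minor rephrasing of the paper's order-reversal argument in Lemma~4.3.

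The only notable difference is in how the pullback identity is established. Your primary route is chart-by-chart reduction to the flat case via $L_\omega(u)=\mathcal L_\phi(u+\phi)-\phi$, whereas the paper proves it directly on the graph $\Lambda=\{(G(u)(w),w)\}$: the critical-point conditions give $d\phi_{\mathbb C}=\pi_1^*\partial(\phi+u)+\pi_2^*\bar\partial(\phi+L_\omega u)$ on $\Lambda$, and closedness of the left side forces $G(u)^*\omega_u=\omega_{L_\omega u}$. This is precisely the ``alternative route, closer in spirit to Lempert's original strategy'' that you mention, and it has the advantage of yielding the identity at the level of the K\"ahler form itself rather than only its top power. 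Similarly, for Property~2 the paper argues globally from the attained-supremum equation (their (5.5)--(5.6)) rather than patching local involutions, which is somewhat cleaner.
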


\section{The classical Legendre transform}
\label{Sec2} As a warm up and for comparison we first briefly look at the classical Legendre transform, (1.1). If $\psi$ is differentiable, and if the supremum in the right hand side is attained  in a point $x$, then $y=\partial\psi/\partial x=:g_\psi(x)$. Hence we have that
\be
x\cdot y\leq \psi(x)+\psi^*(y)
\ee
with equality if and only if $y=g_\psi(x)$, or in other words $x\cdot g_\psi(x)-\psi(x)=\psi^*(g_\psi(x))$.
If moreover $\psi$ is assumed smooth and strictly convex, $g_\psi$ is invertible. It follows that $\psi^*$ is also smooth, and by the symmetry of (2.1) that the inverse of $g_\psi$ is $g_{\psi^*}$. Recall that the (real) Monge--Amp\`ere measure of a (smooth) convex function is $\h{MA}_\R(\psi):=\det(\psi_{j,k}(x))dx$. It follows from the above that
$$
g_\psi^*(dy)=\h{MA}_\R(\psi),\quad \mbox{and}\quad g_{\psi^*}^*(dx)=\h{MA}_\R(\psi^*).
$$

We next turn to the Legendre transform of functions on $\C^n$ and its relation to complex Monge--Amp\`ere measures. We then redefine the Legendre transform by (1.2). Equality now occurs when $w=\partial\psi(z)/\partial \bar z=:g_\psi$, where we have also redefined the gradient map $g$ to fit better with complex notation. We now give the first case of Lempert's theorem; it should be compared to how {\it real} Monge--Amp\`ere measures transform.
\begin{thm}
\label{gpsiThm}
{\rm (Lempert)} With the above notation
\be
g_\psi^*(\omega_{\hat\psi})=\omega_\psi,
\ee
so the complex Monge--Amp\`ere measures of $\psi$ and $\hat\psi$ are related by $g_\psi^*(\h{MA}_\C(\hat\psi))=\h{MA}_\C(\psi)$.
\end{thm}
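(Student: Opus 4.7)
The plan is to exploit the identity arising from the complex Fenchel--Young inequality. Set
\[
\Psi(z,w) := \psi(z) + \hat\psi(w) - 2\Re(z\cdot\bar w)
\]
on $\CC^n \times \CC^n$. By the definition of $\hat\psi$ in (1.2), $\Psi \geq 0$, with equality exactly on the graph $\Gamma = \{(z, g_\psi(z))\}$. The envelope theorem, applied at the unique maximizer in (1.2) (which exists and is unique under the strict plurisubharmonicity of $\psi$), yields $\hat\psi_{\bar w_k}(g_\psi(z)) = z_k$; equivalently $g_{\hat\psi} \circ g_\psi = \mathrm{id}$ and $\hat\psi_{w_k}(g_\psi(z)) = \bar z_k$. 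This is the only analytic input.

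The key observation is that on the graph, the pullback of the $(1,0)$-part of $d\Psi$ already vanishes, not merely the real differential. Let $\iota: \CC^n \to \CC^n \times \CC^n$, $\iota(z) := (z, g_\psi(z))$. A direct computation gives
\[
\del \Psi = (\psi_z - \bar w)\,dz + (\hat\psi_w - \bar z)\,dw.
\]
Evaluating the coefficients along $\iota$ using $g_\psi(z) = \psi_{\bar z}(z)$ (so $\overline{g_\psi(z)} = \psi_z(z)$) together with the envelope identity $\hat\psi_w(g_\psi(z)) = \bar z$, both coefficients vanish, so $\iota^*\del\Psi = 0$. Since $\iota^*$ commutes with $d$, this gives $\iota^*(d\del\Psi) = d(\iota^*\del\Psi) = 0$; because $d\del = \dbar\del = -\ddbar$, we conclude $\iota^*(i\ddbar\Psi) = 0$.

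To extract the theorem, I compute $i\ddbar\Psi$ on the product. Writing $2\Re(z\cdot\bar w) = z\cdot\bar w + \bar z\cdot w$ and applying $\ddbar$ to each piece gives $i\ddbar[2\Re(z\cdot\bar w)] = \sigma$, where
\[
\sigma := i\sum_k (dz_k \wedge d\bar w_k + dw_k \wedge d\bar z_k),
\]
so $i\ddbar\Psi = \pi_1^*\omega_\psi + \pi_2^*\omega_{\hat\psi} - \sigma$, with $\pi_1, \pi_2$ the two projections. Pulling back along $\iota$, the first two terms contribute $\omega_\psi$ and $g_\psi^*\omega_{\hat\psi}$. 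A short calculation that uses the symmetry $\psi_{jk} = \psi_{kj}$ to kill the would-be $dz_k \wedge dz_l$ contribution coming from $\iota^* dw_k = \psi_{z_l\bar z_k}dz_l + \psi_{\bar z_l\bar z_k}d\bar z_l$ gives $\iota^*\sigma = 2\omega_\psi$. Substituting into $\iota^*(i\ddbar\Psi) = 0$:
\[
0 = \omega_\psi + g_\psi^*\omega_{\hat\psi} - 2\omega_\psi,
\]
which is the asserted identity $g_\psi^*\omega_{\hat\psi} = \omega_\psi$. Taking the $n$-th exterior power then yields the Monge--Amp\`ere consequence.

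The main obstacle is spotting the stronger vanishing $\iota^*\del\Psi = 0$, rather than just $\iota^*d\Psi = 0$ (the latter is immediate from $\Psi|_\Gamma = 0$). That strengthening is precisely the envelope-theorem identity repackaged geometrically, and it is what allows the argument to sidestep a direct and messy analysis of the non-holomorphic Jacobian of $g_\psi$, including the a priori nontrivial $(2,0)$ and $(0,2)$ components of $g_\psi^*\omega_{\hat\psi}$.
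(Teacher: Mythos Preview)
Your proof is correct and follows essentially the same route as the paper: both restrict to the graph of $g_\psi$, use the envelope identity to show that a $(1,0)$-type form vanishes there (you phrase this as $\iota^*\partial\Psi=0$; the paper writes the equivalent identity $d(z\cdot\bar w)=\partial\psi+\dbar\hat\psi$ on $\Lambda$), then apply $d$ and pull back. The only cosmetic difference is that by working with the holomorphic function $z\cdot\bar w$ rather than its real part, the paper avoids your explicit computation of $\iota^*\sigma=2\omega_\psi$.
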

\begin{proof} Let $\Lambda=\{(z,w); w=g_\psi(z)\}$ be the graph of the gradient map $g_\psi$ considered as a submanifold of $\C^{2n}$. On $\Lambda$
$$
d(z\cdot\bar w)=\partial\psi(z) +\dbar\hat\psi(w).
$$
(This is simply because when $(z,w)$  lie on $\Lambda$, then $\partial\psi(z)=\sum \bar w_j dz_j$ and $\dbar\hat\psi(w)=\sum z_j d\bar w_j$.)
%In particular, this relation holds when we pull it back to $\Lambda$
%under the inclusion map.
Since the left hand side is a closed form, it follows that
$$
\bar\partial\partial\psi(z)=d\partial\psi(z)=-d\dbar\hat\psi(w)=-\ddbar\hat\psi(w).
$$
If we pull back this equation under the map $z\to (z,g_\psi(z))$ we get
$$
\ddbar\psi=g_\psi^*(\ddbar\hat\psi),
$$
which proves the theorem.
\end{proof}

We remark that the apparent discrepancy between how the gradient map transforms the real versus the complex Monge--Amp\`ere measures can be rectified as follows. First, since  $[\psi^{ij}]:=[\psi_{ij}]^{-1}=[\psi^*_{ij}]$
under appropriate regularity assumptions, the Riemannian metric
$\psi_{ij}dx^i\otimes dx^j$ is the pull-back of $\psi^*_{ij}dy^i\otimes dy^j$
via the gradient map $\nabla\psi:\R^n\ra\R^n$. Therefore
the gradient map pulls back the measure $\sqrt{\det[\psi^*_{ij}]}dy^1\w\cdots\w
dy^n$ to the measure $\sqrt{\det[\psi_{ij}]}dx^1\w\cdots\w
dx^n$. When $M$ has toric symmetry, Theorem \ref{gpsiThm}  precisely produces  this observation via a careful translation between the real notation and the complex notation
(cf. the proof of \cite[Proposition 2.1]{CR}).

\section{Complex Legendre transforms}

Let $\O$ denote a domain in $\CC^n$.
%complex manifold with underlying complex structure $J$.
Denote by $\phi$ a real-analytic psh function on $\O$. %=\CC^n$.
Denote by $\phiC$ the analytic extension of $\phi$ to a holomorphic function on
a neighborhood $W_\phi$ of the diagonal in $\CC^n\times\overline{\CC^n}$
(whenever $(M,J)$ is a complex manifold we denote by $\overline M$
the complex manifold $(M,-J)$). Such an extension exists since the diagonal
$$
\Delta_{\CCfoot}:=\{(p,p)\in \CC^n\times \CC^n\,:\, p\in \CC^n\}
$$
is totally real in $\CC^n\times\overline{\CC^n}$.
We immediately switch point of view and work from now on in $\CC^n\times \CC^n$,
where $\phiC$ can be considered as a function on
$$
W_\phi\subset \CC^n\times \CC^n
$$
that is holomorphic in the first factor and anti-holomorphic
in the second. Explicitly, if in local coordinates
$$
\phi(z)=\sum c_{\alpha,\beta}z^\alpha \bar {z}^\beta,
$$
then 
$$
\phiC(z,w)=\sum c_{\alpha,\beta}z^\alpha \bar {w}^\beta.
$$

The
{\it Calabi diastasis function} associated to a real-analytic
strongly psh function $\phi$ on $\O\subset\CC^n$ %
is the function
\beq\label{DphiEq}
\Dphi(p,q):=
\phi(p)+\phi(q)
%\phiC(p,\b p)+\phiC(q,\b q)
-\phiC(p,q)-\phiC(q, p)
=
\phi(p)+\phi(q)
-2\Re\phiC(p,q).
\eeq
defined on $W_\phi\subset \O\times\O$. %\Delta_{\CC^n}$.
%of $\Delta_{\CC^n}$ in $\CC^n\times\overline{\CC^n}$
Clearly $\Dphi(p,q) = \Dphi(q,p)$ with $\Dphi(p,p) = 0$. In the local coordinates, 
$$ \Dphi(p,q) =  \sum c_{\alpha,\beta} (p^{\alpha} - q^{\alpha}) \overline{(p^{\beta} - q^{\beta})}. $$
 Note that the first non-zero term in the Taylor series is non-negative 
 as $\phi$ is psh. Moreover, 
denote by $\pi_i:\CC^n\times \CC^n\ra \CC^n, \; i=1,2,$ the natural projections,
i.e., $\pi_1(z,w)=z, \pi_2(z,w)=w$.
Calabi proves the following \cite{Calabi}.
\blem
\label{CalabiLemma}
There exists an open neighborhood $V_\phi$
of $\Delta_{\O}$ contained in
$W_\phi$
on which
$\Dphi(\,\cdot\,,q)$
is strongly convex with % modulos $C$,
% (i.e.,
%$\Dphi(z,q)-C|z|^2$ is convex)
%on $\pi_1(V_{\phi}\cap \O\times\{q\})$.
\beq\label{StrictConvEq}
\Dphi(z,q)
\ge
C|z-q|^2
\q
\h{on\ }
\pi_1(V_{\phi}\cap \O\times\{q\}). 
\eeq
%Then $V_\phi$ is an open neighborhood of $\Delta_{\CC^n}$.
% \hfill\break
% (ii)
% If, moreover, $\i\ddbar\phi\ge \eps \sum_j dz^j\w
% \overline{dz^j}$
% for some $\eps>0$, then $V_\phi$
% contains a ball of fixed size
% (depending on $\eps$) around every point on the diagonal.

\elem

%Indeed, $\Dphi(\,\cdot\,,q)$ is zero for $z=q$
%and positive elsewhere on $V_\phi$, thus $z=q$
%is a critical point. Being strictly convex with modulos
%$C$ then implies \eqref{StrictConvEq}.

We can now define a Legendre type transform
associated to $\phi$.
For simplicity, whenever we refer to a function
in our discussions
below, we do not allow the constant function $+\infty$. We denote by
$\usc f$ the upper semi-continuous (usc) regularization of a function $f:X\ra \RR$,
$$
\usc f(x):=\lim_{\delta\ra0}\sup_{y\in X\atop |y-x|<\delta}f(y).
$$
It is the smallest usc function majorizing $f$.
%Denote by $\pi_i:M\times M\ra M, \; i=1,2,$ the natural projections.

\bdefin
\label{cxLegDef}
The complex Legendre transform $\Lphi$ is a mapping
taking a function $\psi:\O\ra\RR\cup\{\infty\}$ to
% of $\PSH(\CC^n)$ to itself %$\PSH(\overline{\CC^n})$
% itself
%defined by
$$
\Lphi(\psi)(q)
:=
\usc\sup_{
p\in \O \atop (p,q)\in V_\phi
%\{p\in M: (p,q)\in U_\phi\}
%p\in \pi_1(U_\phi\cap M\times\{q\})
}
[2\Re \phiC(p,q)-\psi(p)].
$$
%where $U_\phi$
%is some  \sp small/maximal?\sp neighborhood of $\diagM$ in $\MMbar$ on
%which $\phi$ admits an analytic continuation.
%\sp do we need to add a usc regularization to make sure it
%is really in PSH?\sp
\edefin
Since $\Re \phiC(p,q)$ is pluriharmonic in $q$, $\Lphi$ is psh.
The definition depends on $V_\phi$, and we discuss that
dependence later.
%In addition, observe that both the definition/existence of $V_\phi$ and Definition \ref{cxLegDef}
%make sense for any (possibly non-compact)
%\K manifold $M$ and strongly psh function $\phi$ on $M$.

%The usc reguralization is in order to ensure that the supremum (over the family
%of pluriharmonic functions) is psh.
When $\phi(z)=|z|^2$, then
$\phiC(z,w)=z\cdot \overline{w}$, while
$\Dphi(z,w)=|z-w|^2$, $C=1$ and $W_\phi=V_\phi=\CC^n\times\CC^n$; we recover, up to
%a conjugation and
a factor of 2, the Legendre
transform on $\RR^{2n}$.

\blem
\label{SelfDualLemma}
Let $\psi:\O\ra\RR\cup\{\infty\}$. Then
$\Lphi(\psi)=\psi$ if and only if $\psi=\phi$.
\elem

\bpf
According to Lemma \ref{CalabiLemma}, whenever $(z,w)\in V_\phi$,
$$
2\Re\phiC(z,w)\le \phi(z)+\phi(w),
$$
with equality iff $z=w$. Thus $\Lphi\phi=\phi$.
Conversely, suppose $\Lphi\psi=\psi$. Then, whenever $(z,w)\in V_\phi$,
$$
\psi(z)+\Lphi\psi(w)=
\psi(z)+\psi(w)
\ge 2\Re\phiC(z,w).
$$
Setting $z=w$ gives $\psi\ge \phi$. Since the complex Legendre transform
is order-reversing then also $\psi\le \phi$.
\epf

\bdefin
\label{phiCvxDef}
Say that a function $\psi:\O\ra\RR\cup\{\infty\}$ is $\phi$-convex
%($\o$-convex)
if $\psi=\Lphi\eta$
%($\psi=\Lo\eta$)
for some usc function $\eta:\O\ra\RR\cup\{\infty\}$.
%($\eta:M\ra\RR\cup\{\infty\}$).
\edefin

\blem
\label{DoubleDualLemma}
Let $\eta:\O\ra\RR\cup\{\infty\}$ be usc.
Then $\calL^2_\phi\eta\le\eta$ with equality
iff $\eta$ is $\phi$-convex. % iff $\calL^2_\phi\psi=\psi$.
%\sp or closure? \sp

\elem

\bpf
%First, suppose that $\eta=\Lphi \psi$ with
%$\psi:\CC^n\ra\RR\cup\{\infty\}$.
Whenever $(z,w)\in V_\phi$,
$$
\eta(z)+\Lphi\eta(w)\ge 2\Re\phiC(z,w).
$$
Thus,
\beq\label{LtwoetaEq}
\calL^2_\phi\eta(z)
=
\usc\sup_w[2\Re\phiC(w,z)-\Lphi\eta(w)]\le \usc\, \eta(z)=\eta(z).
\eeq
Next, if $\calL^2_\phi\eta=\eta$, then by definition $\eta$
is $\phi$-convex. It remains therefore to show the converse,
and for this
%Suppose that $\eta=\Lphi\psi$. I
it suffices to show that
$\Lphi^3\nu=\Lphi\nu$ for any usc function $\nu:\O\ra\RR\cup\{\infty\}$.
By  \eqref{LtwoetaEq}, $\Lphi^3\nu\le\Lphi\nu$.
%$\Lphi^3\nu\ge\Lphi\nu$.
However,
$\calL^2_\phi\nu\le\nu$ by \eqref{LtwoetaEq},
thus $\Lphi^3\nu\ge\Lphi\nu$.

\epf

\section{Legendre duality on compact manifolds}

Remarkably, a variant of these transforms can be defined on any
% compact
\K manifold $M$. Let $\omega$ be a closed strictly positive
real-analytic $(1,1)$-form
on $M$.
Locally then $\omega$ equals $\i\ddbar u$ for some strongly psh
real-analytic function $u$, and we define $u_\CC$ and subsequently
$$
D_\omega:=D_u,
$$
locally. To check that these definitions are actually
consistent globally and give rise to a diastasis function on a non-empty neighborhood
%$V_\o$
 of $\Delta_M$ it suffices to observe
 \cite{Calabi}
that
whenever $h$ is a real-valued
function on a ball in $\CC^n$ that is pluriharmonic, i.e., $\i\ddbar h=0$,
then $h=h_1(p)+\overline{h_1(p)}$ with $h_1$ holomorphic; thus,
%$h_{1\,\CCfoot}(p,\b q)=h_1(p)$,
%and
$h_{\CCfoot}(p,\b q)=h_1(p)+\overline{h_1(q)}$, so
$D_h\equiv 0$.
Once again, by a variant of Lemma \ref{CalabiLemma} \cite[Proposition 5]{Calabi}
we obtain an open neighborhood $V_\omega$ of the diagonal on which $D_\omega$ is nonnegative and strongly convex in each variable and on which $\omega$
admits local real analytic \K potential $u$ for which $u_\CC$ exists.
This neighborhood contains a $\delta$-tubular neighborhood (with respect to some Riemannian metric) of the diagonal, at least whenever $M$ is compact.

Now, fix a real-analytic \K form $\o$ on $M$.

We can now define a Legendre transform with respect to  $\o$.
\bdefin
\label{cxLegGlobalDef}
The complex Legendre transform $L_\o$ maps a function
$\psi:M\ra\RR\cup\{\infty\}$ to

$$
L_\omega(\psi)(q):=\usc\sup_{p\in M\atop (p,q)\in V_{\ovp}}
[-D_\o(p,q)-\psi(p)].
$$
%where $U_{\ovp,q}:=\{p\in M: (p,\b q)\in U_{\ovp}\}$.
%is the maximal neighbordhood of $\diagM$ in $\MMbar$ on
%which $\phi$ admits an analytic continuation.
%\sp do we need to add a usc regularization to make sure it
%is really $\o$-PSH?\sp
\edefin

As in the setting of $\O\subset \CC^n$, the transform also depends on
$V_{\ovp}$.

\bdefin
\label{phiCvxDef}
Say that $\psi$ is $\o$-convex
if $\psi=L_\o\eta$ for some usc function $\eta:M\ra\RR\cup\{\infty\}$.
\edefin

The following lemmas follow in the same manner as Lemmas
\ref{SelfDualLemma} and \ref{DoubleDualLemma}.
In fact, intuitively, $L_\o$ is locally given by
$$
L_\o(\psi)(q)=\calL_{u}(u+\psi)-u,
$$
where $u$ is a local \K potential for $\o$.

\blem
\label{SelfDual2Lemm}
Let $\psi:M\ra\RR\cup\{\infty\}$. Then
$L_\o(\psi)=\psi$ if and only if $\psi=0$.
\elem

\blem
\label{DoubleDual2Lemma}
Let $\eta:M\ra\RR\cup\{\infty\}$.
Then $L_\o^2\eta\le\eta$ with equality
iff $\eta$ is $\o$-convex. % iff $\calL^2_\phi\psi=\psi$.
%\sp or closure? \sp

\elem

\section{A generalized gradient map}
\label{GradientSection}

The fact that Calabi's diastasis $D_\o$ is locally uniformly convex
in each variable
on a neighborhood of the diagonal
should, intuitively, ensure
the supremum in the definition of $L_\o$ is attained in a unique point.
In this section we
make this intuition rigorous by giving a condition that ensures the supremum is
attained.
We will discuss only the case of compact manifolds. The case of non compact manifolds, leading up to Theorem 1.1, is proved similarily.

\bthm
\label{CxLegCompactThm}
Let $(M,\o)$ be a compact closed real-analytic \K manifold. There exists $\eps=\eps(\o)>0$ such
that for every function  $\eta$ satisfying $||\eta||_{C^2(M)}<\eps$, the supremum in Definition 4.1 is for any $q$ in $M$ attained at a unique point, $z=G(\eta)(q)$.

\medskip

1. If $\eta$ is of class $C^k$ then $G(\eta)$ is a diffeomorphism of $M$  of class $C^{k-1}$.

\medskip

2. $L_\o(\eta)$ is of class $C^k$ and the map $\eta\to L_\o(\eta)$ is continuous for the $C^k$-topology.

\medskip

3. $L^2_\o(\eta)=\eta$.

\medskip

4. $G(L_\o(\eta))=G(\eta)^{-1}$.

\ethm

\begin{proof}
Because $M $ is compact, the neighborhood $V_\o$ contains a ball of fixed size, call it $\delta>0$ (with respect to the the distance function $d$ of the reference metric $\o$, say), around every point on the diagonal.
Fix $q\in M$.
Let
\begin{equation}
\begin{aligned}
\label{fqEq}
f_q(z)
=
-D_\o(z,q)-\eta(z), \q z\in \pi_1(V_\o\cap (M\times\{q\})).
\end{aligned}
\end{equation}
We claim that $f_q$ attains a unique maximum in $\pi_1(V_\o\cap (M\times\{q\}))$. %$B_\delta(q)$. Let $d(z,q)$ be the distance between points $z$ and $q$ with respect to the reference metric $\omega$.
First, Lemma \ref{CalabiLemma} implies that
$$
f_q(z)\le -Cd(z,q)^2-\eta(z), \q z\in \pi_1(V_\o\cap (M\times\{q\})),
%B_\delta(q),
$$
and if $||\eta||_{C^2(M)}$ is sufficiently small, $f_q$ is uniformly concave on $\pi_1(V_\o\cap (M\times\{q\}))$.
%$B_\delta(q)$.

If $||\eta||_{C^0(M)}<\eps$,
%Lemma \ref{CalabiLemma} implies that
$$
f_q(q)\ge -\eps,
$$
while,
$$
f_q(z)\le -Cd(z,q)^2+\eps, \q z\in \pi_1(V_\vp\cap (M\times\{q\})),
%B_\delta(q),
$$
So, if $\eps$ is small enough, 
$$
f_q(z)\le -2\eps, \q z\in
%B_\delta(q)
\pi_1(V_\vp\cap (M\times\{q\}))\sm B_{\delta/2}(q),
$$
Thus we see that the maximum of $f_q$ over
$\pi_1(V_\o\cap (M\times\{q\}))$
%$B_\delta(q)$
must be attained at a  point in $B_{\delta/2}(q)$, which moreover is unique by the strict concavity of $f_q$.

This maximum point is the unique solution $z$ of
\beq
\label{Supr2Eq}
%\begin{aligned}
F_q(z):=
%\del
\nabla_z f_q(z)=
0
%\frac{\del}{\del z}
%\Big(
%2\Re(\phiC+\vp_\CC)(z,q)-(\phi+\eta)(z)
%\Big)=0
\eeq
in $B_{\delta/2}(q)\subset \pi_1(V_\phi\cap (M\times\{q\}))$.
We denote this unique solution
by
$z=G(\eta)(q)$.
Thus, 
\beq 
L_\o(\eta)(q)=f_q(G(\eta)(q))
\label{LphiSupAttainedCptEq}
\eeq
Since $f_q$ is uniformly concave in $B_{\delta/2}(q)$,
the Implicit Function Theorem (IFT) implies that $G(\eta)(q)$
is of class $C^{k-1}$ in $q$ whenever $\eta\in C^k$.
Thus, by \eqref{LphiSupAttainedCptEq} it follows that
$L_\o\psi\in C^{k-1}$.

Next, we claim that $G(\eta)$
is invertible. % since it is a small perturbation of the identity map.
To see that, let
\beq
\label{Supr2Eq}
%\begin{aligned}
F_{t,q}(z):=
\nabla f_{t,q}(z),
\q
f_{t,q}(z):=-D_\o(z,q)-t\eta(z).
\eeq
The IFT, applied to $F_{t,q}$, implies that
\begin{equation}
\begin{aligned}
\label{GvptEq}
\nabla G(t\eta)=
-(\nabla_z F_{t,q})^{-1} \nabla_q F_{t,q}
=
(\nabla^2_z D_\o(z,q)-t\nabla^2_z\eta)^{-1} \nabla_q F_{q}
.
\end{aligned}
\end{equation}
When $t=0$, Lemma \ref{SelfDual2Lemm} implies that $G(0)(q)=q$,
so
\begin{equation}
\begin{aligned}
\label{Gvp0Eq}
I
=
\nabla G_\vp(0)
=
-(\nabla^2_z F_{q})^{-1} \nabla_q F_{q}
=
(\nabla^2_z D(z,q))^{-1} \nabla_q F_{q}
.
\end{aligned}
\end{equation}
Combining  \eqref{GvptEq} and \eqref{Gvp0Eq} we see that whenever $||\eta||_{C^2}$
is sufficiently small, the Jacobian of $G(t\eta)$ is positive definite for all $t\in[0,1]$, hence the Jacobian of $G(\eta)$ is invertible. This means that $G(\eta)$ is locally injective, i e that if $q\neq q'$ and $d(q,q')$ is sufficiently small, then $G(\eta)(q)\neq G(\eta)(q')$. Since we moreover know that $G(\eta)$ is uniformly close to the identity, this gives that $G(\eta)$ is globally injective. Since it is also open, it is a diffeomorphism onto its image.

That the supremum in Definition 4.1 is obtained for $z=G(\eta)(q)$ means that
\be
L_\o(\eta)(q)=-D_\o(G(\eta)(q),q)-\eta(G((\eta)(q)).
\ee

Since, for $(z,q)\in V_\o$ we always have
\be
\eta(z)\geq -D_\o(z,q)-L_\o(\eta)(q)
\ee
it follows that
\be
\nabla L_\o(\eta)(q)=-\nabla_q D_\o(z,q).
\ee
when $G(\eta)(q)=z$, so
\be
\nabla L_\o(\eta)(q)=-(\nabla_q D_\o)(G(\eta)(q),q).
\ee
But $D_\o$ is smooth, in fact real analytic, so we get, since $G(\eta)$ is of class $C^{k-1}$  that $\nabla L_\o(\eta)$ is of class $C^{k-1}$ too. In other words $L_\o(\eta)$ is of class $C^k$.

On the other hand, if $\eta$ is close to zero, we know that $G(\eta)$ is close to the identity, which is $G(0)$. Hence, by (5.9), $\nabla L_\o(\eta)$ is close to $\nabla L_\o(0)=0$. Since it follows directly from the definition that the $C^0$-norm of $L_\o(\eta)$ is small if the $C^0$-norm of $\eta$ is small, it follows that $L_\o(\eta)$ is close to zero in the $C^k$-norm if $\eta$ is close to zero in the $C^k$-norm. In particular, with $k=2$, this implies that we can apply the arguments in the beginning of this proof to $L_\o(\eta)$. Then (5.6) implies that $L_\o^2(\eta)=\eta$ and that $G(L_\o(\eta))=G(\eta)^{-1}$.

This completes the proof.
\end{proof}

\section{The inverse gradient map and the complex \MA operator}

As we shall comment later on, our next result can be
viewed as a variant of a result of Lempert.

\bthm
\label{CxGradMapThm}
Fix a real-analytic \K form $\o$.
Then, for each smooth function  $\psi$  such that $\i\ddbar\psi +\o >0$, we let $\omega_\psi:=\o+\i\ddbar\psi$. Then if  $\psi$ satisfies the assumptions of
Theorem \ref{CxLegCompactThm}
$$
G(\psi)^\star \o_\psi = \o_{L_\o\psi}.
$$
Therefore, the complex \MA measure of $\psi$, $\o_\psi^n$, is pulled-back under
$G(\psi)$ to the complex \MA measure of $L_\o\psi$.
\ethm

As pointed out in the introduction and section 2, this should be compared with the  contrasting
fact that the real \MA operator
is pulled-back under the inverse gradient map to the Euclidean measure.

\bpf
Since the statement is local we  look at a neighborhood $W\subset M$ where we
have a real-analytic K\"ahler potential $\phi$ of $\omega$.
By definition, for $(z,w)\in V_\o$
$$
-D_\o(z,w)
\leq \psi(z)+L_\o\psi(w)
$$
with equality precisely when $z=G(\psi)(w)$. Let $\Lambda$ be the set where this holds. Then, when $(z,w)\in\Lambda$, 
$$
-\del_z D_\o(z,w)
=\del_z\psi(z),
$$
or, equivalently, 
$$
-\dbar_w D_\o(z,w)
=
\dbar_wL_\o\psi(w).
$$
In other words (since $D_\o(z,w)=\phi(z)+\phi(w)-\phi_\C(z,w) -\bar\phi_\C(z,w)$),
\beq
\label{delz1Eq}
\del_z\phi_\C (z,w)-\del_z\phi(z)
=\del_z\psi(z),
\eeq
and
\beq
\label{delz2Eq}
\dbar_w\phi_\C(z,w)-\dbar_w \phi(w)
=
\dbar_w L_\o\psi(w).
\eeq

This  means  that the identity holds when both sides are considered
as forms on $\C^{2n}$ and $(z,w)$ lies on $\Lambda$.
Since $\Lambda$ is the graph of $G(\psi)$,  $\Lambda$ is a manifold of  real dimension $2n$.
% amounts to  saying that $\partial\Phi_\C(z,w)/
%\partial z=\partial\psi/\partial z$,  it is a submanifold of real dimension $2n$.
Let $p_1$ and $p_2$ be the projections of $W\times W$ to the first and second factors, and let $\pi_1$ and $\pi_2$ be their restrictions to $\Lambda$.
By Theorem \ref{CxLegCompactThm},
 $\pi_1$ and $\pi_2$ are invertible maps and
\beq\label{GvpEq}
G(\psi)
=
\pi_1\circ\pi_2^{-1}.
\eeq
By (6.1) and (6.2),
$$
d\phi_\C(z,w)
=
\pi_1^\star\partial(\phi+\psi)(z)
+
\pi_2^\star\dbar(\phi+L_\o\psi)(w), \q \h{when  $(z,w)\in\Lambda$.}
$$
Hence the same identity holds when we restrict both sides to $\Lambda$ as differential forms. 
 Since the left hand side is a closed form, it follows that
$$
\pi_1^*d\partial(\phi+\psi)(z) +\pi_2^*d\dbar(\phi+L_\o\psi)(w)=0, \q \h{on $\Lambda$.}
$$
If we apply $(\pi_2^{-1})^*$ to this equation we get
$$
(\pi_2^{-1})^*\pi_1^*\dbar\partial(\phi+\psi)+\partial\dbar(\phi+L_\o\psi)=0.
$$
By \eqref{GvpEq} it follows that
$$
G(\psi)^\star\omega_\psi=G(\psi)^\star(\i\ddbar(\phi+\psi))=
\i\ddbar(\phi+L_\o\psi)=\omega_{L_\o\psi},
$$
so we are done.
\epf

\section{The Mabuchi metric}

Let $M$ be a closed compact \K manifold.
Recall that if $\omega$ is a K\"ahler form on $M$, the space of $\omega$-plurisubharmonic functions, $\H_\omega$  is the space of smooth functions on $M$ such that $\omega_\psi:=\i\ddbar\psi+\omega>0$. This is an open subset of the space of smooth functions and inherits a structure as a differentiable manifold from the one on $C^\infty(M)$. The tangent space to $\H_\omega$ is the space of smooth functions on $M$ and
one defines a weak Riemannian metric on $\H_\omega$ by
$$
g_M(\nu,\chi)_\psi =\int_M \nu\chi\omega_\psi^n,
$$
for every $\nu,\chi\in T_\psi\H_\o\cong C^\infty(M)$.

\begin{prop}
\label{FirstVarProp}
Let $\o$  be real-analytic.
There exists a neighborhood $\U_\omega$ of 0 in $\calH_\o$ in the $C^2$ topology
such that
$L_\o$ defines a Fr\'echet differentiable map from $\U_\omega$ to $\U_\omega$.
Its differential is
$$
dL_\o(\eta). \chi=-\chi\circ G(\eta), \q \forall \eta\in \U_\omega.
$$
\end{prop}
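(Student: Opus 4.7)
The strategy is to exploit the closed formula
\[
L_\omega(\eta)(q) \;=\; -D_\omega(G(\eta)(q),q) \;-\; \eta\bigl(G(\eta)(q)\bigr)
\]
obtained in Theorem \ref{CxLegCompactThm}, and then invoke the envelope principle. First, I would verify that $L_\omega$ sends some $C^2$-neighborhood $\calU_\omega$ of $0$ in $\calH_\omega$ into itself. That $L_\omega(\eta)$ has small $C^2$-norm when $\eta$ does was already established at the end of the proof of Theorem \ref{CxLegCompactThm}; positivity, i.e.\ $L_\omega(\eta)\in\calH_\omega$, follows from Theorem \ref{CxGradMapThm}, since $\omega_{L_\omega\eta}=G(\eta)^\star\omega_\eta>0$ as the pull-back of a positive $(1,1)$-form under a diffeomorphism.

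Next I would compute the G\^ateaux derivative. Fix $\eta\in\calU_\omega$ and $\chi\in C^\infty(M)$, and for small $t$ set $z_t=G(\eta+t\chi)(q)$. Differentiating the above formula in $t$ at $t=0$ gives
\[
\frac{d}{dt}\bigg|_{t=0}L_\omega(\eta+t\chi)(q)
=-\bigl[\nabla_z D_\omega(z_0,q)+\nabla\eta(z_0)\bigr]\!\cdot\!\dot z_0
-\chi(z_0).
\]
But $z_0=G(\eta)(q)$ is exactly the critical point of $z\mapsto -D_\omega(z,q)-\eta(z)$, so the bracketed term vanishes and one is left with $-\chi(G(\eta)(q))$. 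This is the envelope theorem, and it is what makes the claimed formula $dL_\omega(\eta).\chi=-\chi\circ G(\eta)$ natural.

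Finally I would upgrade G\^ateaux to Fr\'echet differentiability. The map $\chi\mapsto -\chi\circ G(\eta)$ is clearly linear and bounded from $C^\infty(M)$ to $C^\infty(M)$ (and from $C^k$ to $C^{k-1}$, say), since $G(\eta)$ is a diffeomorphism. For the remainder estimate, I would write
\[
L_\omega(\eta+\chi)(q)-L_\omega(\eta)(q)+\chi(G(\eta)(q))
=\bigl[L_\omega(\eta+\chi)(q)+(\eta+\chi)(G(\eta)(q))+D_\omega(G(\eta)(q),q)\bigr],
\]
using that the bracket on the right equals $0$ when we replace $G(\eta)(q)$ by $G(\eta+\chi)(q)$ in the last two terms (by the identity from Theorem \ref{CxLegCompactThm}). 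Then the extremality of $G(\eta+\chi)(q)$ together with the uniform concavity of $z\mapsto -D_\omega(z,q)-(\eta+\chi)(z)$ in a fixed ball (uniform in $q$ and in small $\chi$, as in the proof of Theorem \ref{CxLegCompactThm}) gives a bound of order $|G(\eta+\chi)(q)-G(\eta)(q)|^2$ on the bracket, which is $O(\|\chi\|_{C^1}^2)$ by the implicit function theorem applied uniformly in $q$. This is the $o(\|\chi\|)$ estimate required for Fr\'echet differentiability.

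The main obstacle, in my opinion, is not the formal envelope computation, which is short, but the uniform-in-$q$ remainder estimate needed to convert G\^ateaux into Fr\'echet differentiability. The required uniformity, however, is already built into the proof of Theorem \ref{CxLegCompactThm}: the concavity constant of $f_q$ and the size of the neighborhood in which $G(\eta)(q)$ is trapped are uniform in $q\in M$ and in $\eta$ in a small $C^2$-ball, so the argument goes through.
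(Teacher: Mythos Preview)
Your proposal is correct and follows essentially the same route as the paper: both compute the directional derivative via the envelope principle, using that $G(\eta)(q)$ is the critical point of $z\mapsto -D_\omega(z,q)-\eta(z)$ so that the term involving $\dot z_0$ drops out. In fact you are more thorough than the paper, which stops after the chain-rule computation; your explicit discussion of why $L_\omega$ maps $\U_\omega$ into itself and your quadratic remainder estimate for the Fr\'echet property go beyond what the paper's own proof records.
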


\bpf
Let $q\in M$.
Define (cf. \eqref{fqEq}),
$$
f_q(z,\eta):=
-D_\o(z,q)-\eta(z), \q z\in \pi_1(V_\o\cap (M\times\{q\})),
$$
and let $F_q(z,\eta):=\nabla_z f_q(z,\eta)$. Then $F_q$ is of class $C^{k-1}$ if $\eta$ is of class $C^k$. By the implicit function theorem
the equation
$$
F_q(z,\eta)=0
$$
defines $z$ as a function of $\eta$, $z=z(\eta)$, and since $z(\eta)$ is the point maximazing $f_q(z,\eta)$ for given $\eta$ we have that $z(\eta)=G(\eta)(q)$ (which we now regard as a function of $\eta$, while $q$ is fixed). Hence we see that $z(\eta)=G(\eta)(q)$ is of class $C^{k-1}$. Moreover
$$
L_\o(\eta)(q)=f_q(z(\eta), \eta).
$$
Hence, by the chain rule
$$
d/dt|_{t=0} L_\o(\eta +t\chi)= d/dt|_{t=0}f_q(z(\eta), \eta+t\chi),
$$
since $\nabla_zf_q(z,\eta)=0$ for $z=z(\eta)$.  Since
$$
d/dt|_{t=0}f_q(z(\eta), \eta+t\chi)=-\chi(z(\eta))=-\chi(G(\eta)(q))
$$
we are done.
\epf

\begin{thm} Let $M$ be a closed compact K\"ahler manifold and let $\omega$ be a real analytic K\"ahler form.
There exists a $C^2$ neighborhood $\U_\o$ of $0$ in $\calH_\o$
such that
$L_\o$ defines a Fr\'echet differentiable map from $\U_\omega$ to
itself with the following properties:

\medskip

\noindent
(i) $L_\o$ is an isometry for the Mabuchi metric on $\U_\o$.

\medskip

\noindent
(ii) $L_\o^2\psi=\psi$ for $\psi\in\U_\o$.

\medskip

\noindent
(iii) $L_\o\psi=\psi$ if and only if $\psi=0$.
\end{thm}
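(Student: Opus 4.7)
The plan is to bundle together the three statements from the results of Sections 4--7. Choose $\U_\o$ to be a $C^2$-neighborhood of $0$ in $\calH_\o$ small enough that the hypotheses and conclusions of Theorem \ref{CxLegCompactThm}, Theorem \ref{CxGradMapThm} and Proposition \ref{FirstVarProp} all hold on $\U_\o$, and further small enough that $L_\o(\U_\o)\subseteq \U_\o$. The latter containment is possible because Theorem \ref{CxLegCompactThm} establishes $C^2$-continuity of $L_\o$ at $0$ together with $L_\o(0)=0$ (the latter via Lemma \ref{SelfDual2Lemm}), so any sufficiently small $C^2$-ball around $0$ maps into a prescribed $C^2$-ball.

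With $\U_\o$ so chosen, statement (iii) is just a restriction of Lemma \ref{SelfDual2Lemm}, and statement (ii) is part 3 of Theorem \ref{CxLegCompactThm}. The only real content is (i), the isometry property.

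For (i), the plan is to polarize and compute. Given $\eta\in\U_\o$ and tangent vectors $\chi,\nu\in T_\eta\calH_\o$, the differential formula of Proposition \ref{FirstVarProp} gives $dL_\o(\eta).\chi=-\chi\circ G(\eta)$ and similarly for $\nu$, so the Mabuchi inner product at $L_\o\eta$ becomes
$$
g_M\bigl(dL_\o(\eta).\chi,\,dL_\o(\eta).\nu\bigr)_{L_\o\eta}
=\int_M(\chi\circ G(\eta))\,(\nu\circ G(\eta))\,\o_{L_\o\eta}^n.
$$
The Monge--Amp\`ere transformation law of Theorem \ref{CxGradMapThm} asserts $\o_{L_\o\eta}^n=G(\eta)^\star\o_\eta^n$, so the integrand rewrites as $G(\eta)^\star(\chi\nu\,\o_\eta^n)$. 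Since Theorem \ref{CxLegCompactThm} guarantees $G(\eta)\colon M\to M$ is a diffeomorphism that is $C^1$-close to the identity and hence orientation-preserving, a change of variables yields $\int_M\chi\nu\,\o_\eta^n=g_M(\chi,\nu)_\eta$, finishing (i).

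The main conceptual obstacle---producing $G(\eta)$ as an honest global diffeomorphism of $M$ and establishing the transformation law relating $\o_\eta$ to $\o_{L_\o\eta}$ under pullback---has already been overcome in Theorems \ref{CxLegCompactThm} and \ref{CxGradMapThm}. Once those are in hand, the isometry reduces to the one-line change-of-variables computation above, and no genuinely new work remains. A minor bookkeeping point is to verify that $G(\eta)$, being $C^1$-close to $\mathrm{id}_M$, is indeed orientation-preserving so that the change-of-variables formula produces $+1$ rather than $\pm 1$; this is immediate from the last paragraph of the proof of Theorem \ref{CxLegCompactThm}.
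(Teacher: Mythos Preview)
Your proof is correct and follows essentially the same route as the paper's: (ii) and (iii) are read off from Lemma~\ref{SelfDual2Lemm} and Theorem~\ref{CxLegCompactThm}, and (i) is the one-line change-of-variables computation combining Proposition~\ref{FirstVarProp} with Theorem~\ref{CxGradMapThm}. The only cosmetic difference is that you arrange $L_\o(\U_\o)\subseteq\U_\o$ at the outset via $C^2$-continuity of $L_\o$ at $0$, whereas the paper obtains it at the end by replacing $\U_\o$ with $\U_\o\cap L_\o(\U_\o)$; your added remark on orientation-preservation of $G(\eta)$ is a detail the paper leaves implicit.
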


\begin{proof}
Properties (ii) and (iii) are the content of
Lemma \ref{SelfDual2Lemm} and Theorem \ref{CxLegCompactThm}.
%--\ref{DoubleDual2Lemma}.
We turn to proving (i).
Indeed, by Theorem \ref{CxLegCompactThm} and Proposition \ref{FirstVarProp}
$$
\baeq
g_M(dL_\o\psi(\chi),dL_\o\psi(\nu))|_{L_\o\psi}
&=
\int_M\chi\nu\circ G(\psi)\o^n_{L_\o\psi}
\cr
&=
\int_M\chi\nu\circ G(\psi)G(\psi)^\star(\o^n_{\psi})
\cr
&=
\int_M\chi\nu\o^n_{\psi}
=g_M(\chi,\nu)|_\psi,
\eaeq
$$
proving (i).

Finally, if $\U_\o$ is a neighborhood satisfying properties (i)--(iii),
then replacing $\U_\o$ by
$\U_\o\cap \calL(\U_\o)$, we may assume $\calL$ maps $\U_\o$ to itself.
\end{proof}

This theorem should be seen in the light of the picture of $\H_\omega$ as a symmetric space, put forward by Mabuchi, Semmes and Donaldson,
\cite{M,S,Don}. In these works $\H_\omega$ is first studied as a Riemannian manifold, its curvature tensor is computed and is found to be covariantly constant. In the finite dimensional case, this implies the existence of symmetries around any point in the space. As described in \S\ref{SemmesSubSec}, Semmes has also found
symmetries for the Mabuchi metric,
but to our knowledge the $\omega_\vp$-Legendre transforms are
the first examples of explicit symmetries for $\H_\omega$.
It would be interesting to generalize the theorems of Artstein-Avidan--Milman \cite{AM}
and B\"or\"oczky--Schneider \cite{BS}
to this setting and investigate whether these are {\it all}
the symmetries of $\H_\o$ under some reasonable regularity assumptions.

From Theorem 7.2 it follows in particular that the $\omega$-Legendre transform maps geodesics in $\H_\omega$ to geodesics. By the work of
Semmes \cite{S}, geodesics in $\H_\omega$ are precisely given by solutions of the homogenous complex Monge--Amp\`ere equation, so that a curve
$t\to
\psi_t(z)=\psi(t,z)$
, where $t$ lies in a strip $0<\Re t<1$ and $\psi$ depends only on the real part of $t$, is a geodesic in
$\H_\omega$
if and only if
$$
(\i\ddbar_{t,z}\psi +\omega)^{n+1}=0.
$$
One main motivation  for Lempert's work was to find symmetries of the inhomogenous complex Monge--Amp\`ere equation. Here we find a somewhat different kind of symmetries for the homogenous complex Monge--Amp\`ere equation (HCMA).
The applicability of this may be somewhat limited by the absence of positive existence results for geodesics, but if we change the set up slightly and consider functions $\psi$ defined for $t$ in a disk instead of a strip, there is at least one setting in which our theorem applies. Considering boundary data $s\to\psi_s$ on the unit circle that happen to
extend to a smooth solution of the HCMA,
then the same thing holds for sufficiently small perturbations of the data
\cite{Don2002}, see also \cite{Moriyon}. Taking the given boundary data to be identically equal
to 0, for which trivially an extension exists, we see that any boundary data that are sufficiently small can be extended to a solution of the HCMA, $\psi_t$ with $t$ in the disk $\Delta$. Theorem 7.2 shows that then $L_\o(\psi_t)$ also solves the HCMA.
Indeed, solutions of the HCMA are critical points of the energy functional induced by the Mabuchi metric
$$
E(\psi)=\int_{\Delta\times M} \partial_t\psi\wedge\dbar_t\psi\wedge \omega_{\psi_t}^n,
$$
and as shown above the energy functional is preserved under the $\vp$-Legendre transform.

\section{Relations with Lempert's and Semmes' work}

\subsection{Comparison to Lempert's theorem}

Lempert starts with a complex manifold $M$ and its holomorphic cotangent bundle $T^*(M)$. If $z$ are local coordinates on $M$ it induces local coordinates $(z,\xi)$ on $T^*(M)$, so that a one-form, i e a point in $T^*(M)$ can be written $\sum \xi_j dz_j$. There is a standard holomorphic symplectic form $\Omega$ on $T^*(M)$ that in such coordinates is written $\Omega=\sum d\xi_j\wedge dz_j$. A (local) holomorphic map from $T^*(M)$ to itself, $F$, is symplectic if $F^*(\Omega)=\Omega$, and Lempert's construction depends on the choice of such a symplectic map.
Another ingredient is a
differentiable
%smooth
real valued function $\psi$ on $M$. From $\psi$ we get a gradient map
\be
z\to(z,\partial\psi)=:\nabla \psi
\ee
which is a section of $T^*(M)$.
Lempert's generalized gradient map is  the map from $M$ to itself
$$
G_\psi= \pi\circ F\circ \nabla\psi,
$$
where $\pi$ is the projection from $T^*(M)$ to $M$. He then defines a generalized Legendre transform by
$$
L_F(\psi)(G_\psi(z))= \psi(z) +2\Re \Sigma(\nabla\psi),
$$
where $\Sigma$ is a {\it generating function} of the symplectic tranformation $F$. This means that $\Sigma$ is holomorphic  on $T^*(M)$ and satisfies
$$
d\Sigma =\xi\cdot dz -F^*(\xi\cdot dz).
$$
Such a generating function exists at least locally since the right hand side is a closed form if $F$ is symplectic.

We indicate briefly how this translates to our set up. First, there is a minor difference that we work with a symplectic form and generating function that is holomorphic in $z$ and antiholomorphic in $\xi$, but the major difference is that we chose a different kind of generating function. The symplectic transformation $F$ gives a map from $T^*(M)$ to $M$ by $w=\pi(F(z))$. For special  symplectic maps (sometimes called {\it free canonical transformations}) one can choose $(z,w)$ as coordinates on $T^*(M)$ and express the generating function in terms of these coordinates instead. Locally, our construction amounts to choosing $\phi_\C(z,w)$ as such a generating function.  If we define a symplectic transformation using $\phi_\C$ as a generating function one can check that our Legendre transform coincides with Lempert's.

\label{SemmesSubSec}
\subsection{Semmes' work}
Another major motivation for our work is Semmes' work \cite{S} and we
now relate the previous theorem to his work.
Semmes starts by endowing the holomorphic cotangent bundle $(T^*)^{1,0}M$
with the complex structure $\hat J$ induced by pulling back the standard complex
structure (induced by the complex strucutre $J$ on $M$) under
the (locally defined) maps \cite[p. 530]{S}
$$
(z,\lambda)\mapsto(z,\lambda+\del_z\phi).
$$
This is well-defined and independent of the choice of local potential $\phi$ for $\o$
since $\del_z\phi-\del_z\phi'$ is holomorphic whenever $\phi'$ is another such choice.
To any smooth \K potential $\psi$, Semmes then associates
the submanifold $\La_\psi$, the graph of $\del\psi$ in $(T^*)^{1,0}M$.
Under the biholomorphism between $((T^*)^{1,0}M,\hat J)$ and $((T^*)^{1,0}M,J)$
 the standard tautological 1-form $\alpha=\sum\la_i dz_i$ and holomorphic symplectic form
$\O=\sum dz_i\w d\la_i$ on the latter are pulled back to forms
that we denote by $\hat \alpha$ and $\hat \O$. Then
$\i\hat\O|_{\La_\psi}=\i\ddbar(\phi+\psi)=\o_\psi$.
Semmes goes on to observe that whenever $\vp$ is real-analytic,
there exists an involutive anti-biholomorphism of a neighborhood of $\La_\vp$
in $((T^*)^{1,0}M,\hat J)$ whose fixed-point set equals $\La_\vp$.
Thus, if $\psi$ is sufficiently close to $\vp$ in $C^2$ then $\La_\psi$ is mapped
to another submanifold that must be of the form $\La_\eta$
for some $\eta$. Theorem \ref{CxGradMapThm} precisely establishes
that this involution is given by our generalized gradient map
$G_\vp(\psi)$, so $G_\vp(\psi)(\La_{L_\o\psi})=\La_\psi$.

\bigskip

\section*{Acknowledgments}
This work is based on the SQuaREs project award
``Interactions between convex geometry and complex geometry"
from the American Institute of Mathematics (AIM).
The authors are grateful to AIM and its staff
for the funding, hospitality, and excellent working conditions
over the years 2011--2013.
YAR is grateful to R.J. Berman and Chalmers Tekniska H\"ogskola
for their hospitality and support in Summer 2014 when an important part of this work was carried out.
Finally, part of this work took place while BB and YAR  visited MSRI (supported by NSF grant DMS-1440140)
during the Spring 2016 semester.
This research was
%BB was also
supported by grants from ANR, BSF (2012236),
ERC, NSF (DMS-0802923,1206284,1515703),
VR, and a Sloan Research Fellowship.

\def\listing#1#2#3{{\sc #1}:\ {\it #2}, \ #3.}

\bigskip

{\sc Chalmers University of Technology and G\"oteborg University}

{\tt bob@chalmers.se}

\bigskip

{\sc Universit\'e Pierre et Marie Curie (Paris 6)}

{\tt cordero@math.jussieu.fr}

\bigskip

{\sc Tel-Aviv University}

{\tt klartagb@post.tau.ac.il}

\bigskip

{\sc University of Maryland}

{\tt yanir@umd.edu}

\end{document}